\newtheorem{thm}{Theorem}[section]
\newtheorem{prop}[thm]{Proposition}
\theoremstyle{definition}
\newtheorem{dfn}[thm]{Definition}
\newtheorem{rmk}[thm]{Remark}
\newtheorem{ex}[thm]{Example}
\begin{document}

\title[Multiplier modules of Hilbert C*-modules revisited]{Multiplier modules of Hilbert C*-modules revisited}
\author{Michael Frank} 
\address{Hochschule f\"ur Technik, Wirtschaft und Kultur (HTWK) Leipzig, Fakult\"at Informatik und Medien, PF 301166, D-04251 Leipzig, Germany, ORCID 0000-0001-8972-2154.}
\email{michael.frank@htwk-leipzig.de, michael.frank.leipzig@gmx.de}

\subjclass{Primary 46L08; Secondary 46L05, 46H10, 47B48. }

\keywords{Hilbert C*-modules; multiplier modules; multiplier algebras; bounded modular maps and operators}

\date{May 2025}
\dedicatory{Dedicated to David Royal Larson}

\begin{abstract}
The theory of multiplier modules of Hilbert C*-modules is reconsidered to obtain more properties of these special Hilbert C*-modules. The property of a Hilbert C*-module to be a multiplier C*-module is shown to be an invariant with respect to the consideration as a left or right Hilbert C*-module in the sense of a imprimitivity bimodule in strong Morita equivalence theory. The interrelation of the C*-algebras of ''compact'' operators, the Banach algebras of bounded module operators and the Banach spaces of bounded module operators of a Hilbert C*-module to its C*-dual Banach C*-module, are characterized for pairs of Hilbert C*-modules and their respective multiplier modules. The structures on the latter are always isometrically embedded into the respective structures on the former. Examples are given for which continuation of these kinds of bounded module operators from the initial Hilbert C*-module to its multiplier module fails. However, existing continuations turn out to be always unique. Similarly, bounded modular functionals from both kinds of Hilbert C*-modules to their respective C*-algebras of coefficients are compared, and eventually existing continuations are shown to be unique.
\end{abstract}

\maketitle


Multiplier modules of (full) Hilbert C*-modules appeared in the literature during investigations of extensions of Hilbert C*-modules in terms of short exact sequences. There are several equivalent approaches to the subject, e.g. the double centralizer type approach  \cite{EchRae_1995,Schweizer_2002,Daws_2010,Daws_2011,BKMS_2024,Delfin_2024} or a pure Hilbert C*-module approach \cite{BG_2004,BG_2003}, cf.~\cite{F_1997,Solel_2001} for the link between them. We follow the approach in \cite{BG_2004,BG_2003}. The goal was to generalize the extension theory of C*-algebras to the context of full Hilbert C*-modules. The notion of a multiplier module of a full Hilbert $A$-module over a C*-algebra $A$ is justified by \cite[Thm.~1.2]{BG_2004}: It is the largest essential extension of $X$ up to unitary modular isomorphism of Hilbert $M(A)$-modules, where $M(A)$ is the multiplier C*-algebra of $A$. For details see section 2. In the sequel to \cite{BG_2004,BG_2003}, large parts of the extension theory of Hilbert C*-modules have been described, e.g.~in \cite{B_2004,Brueckler_2004,AC_2011,Kolarec_2013,AB_2017,Jingming_2017,Guljas_2021}. Recently, J.~Taylor described local multiplier modules over local multiplier algebras in case of imprimitivity modules between (non-unital, in general) strongly Morita equivalent C*-algebras, cf.~\cite{Tay1,Tay2}.

The aim of the present note is to fill in some missing facts from the point of view of classical Hilbert C*-module theory and to describe pairs of Hilbert C*-modules and their multiplier modules from the point of view of their common and partially even differing properties. We get some results that contradict habits and opinions from Hilbert and Banach space theory. So, some new examples complement the existing points of view on the theory of Hilbert C*-modules.

Considering multiplier modules as full left Hilbert C*-modules $X$ over a C*-algebra $A$ we obtain that they are at the same time full right Hilbert C*-modules and multiplier modules over the respective C*-algebras ${\rm K}_A(X)$. This reminds special imprimitivity bimodules, however no new type of Morita equivalence can be derived. Considering pairs $(X,M(X))$ of Hilbert C*-modules and their multiplier modules we consider the interrelations of comparable types of operator algebras of bounded module operators over them and continuation problems of operators from $X$ to $M(X)$, the same for bounded modular functionals. So, for ''compact'' operator algebras ${\rm K}_A(X)$ is $*$-isometrically embedded into $K_{M(A)}(M(X))$, but the former might be smaller than the latter one. Furthermore, the Banach algebra of bounded module operators ${\rm End}_{M(A)}(M(X))$ is isometrically embedded into ${\rm End}_A(X)$, as well as the Banach space ${\rm End}_{M(A)}(M(X),M(X)')$ into ${\rm End}_A(X,X')$, and again the former might be smaller than the latter one. Consequently, not any operator of the smaller spaces on $X$ can be continued to a bounded operator on the larger spaces on $M(X)$ obeying strict convergence. If such a continuation exists it is unique. The same picture can be obtained for bounded modular functionals on $M(X)$ and on $X$, respectively, so we found examples of pairs of Hilbert C*-modules $(X,M(X))$ with $X^\bot = \{ 0 \}$ for which no general Hahn-Banach type theorem can be obtained. Remarkably, there does not exist any non-trivial bounded module map from $M(X)$ to $M(A)$ vanishing on $X$. 

\section{Introduction}

We denote C*-algebras by $A,B$. In case a C*-algebra $A$ is non-unital, general C*-multiplier theory provides us with some derived structures like multiplier algebras $M(A)$, left and right multiplier algebras $LM(A)$ and $RM(A)$, resp., and quasi-multiplier spaces $QM(A)$. To calculate these linear spaces any faithful $*$-representation of $A$ on a Hilbert space $H$ can be used. The calculation environment is the von Neumann algebra generated by the faithfully $*$-represented C*-algebra $A$, or $B(H)$ itself, cf. \cite[Ch.~3]{Murphy}, \cite{Pedersen_1984}. The unital C*-algebra $M(A)$ is defined as 
\[
   M(A) = \{ m \in B(H): ma, am \in A \,\, {\rm for} \, {\rm any} \,\, a \in A \} \, .
\]
The Banach algebras $LM(A)=RM(A)^*$ can be defined as
\[
   LM(A) = RM(A)^* =  \{ m \in B(H): ma  \in A \,\, {\rm for} \, {\rm any} \,\, a \in A \} \, ,
\]
whereas the involutive Banach space $QM(A)$ can be obtained as
\[
   QM(A) = \{ m \in B(H): bma  \in A \,\, {\rm for} \, {\rm any} \,\, a,b \in A \} \, .
\]
Inventing different kinds of strict topologies intrinsic characterizations of these structures as certain topological completions of $A$ on bounded sets of $A$ are available. For comprehensive sources we refer to the book by  P.~Ara and M.~Matthieu \cite{AM_2003} and to \cite{Lin_1989,Lin_1991,BMSh_1994,F_1999_1}. Note, that multiplier algebras might admit an entire lattice of non-unital, two-sided, non-isomorphic ideals $A_\alpha$ such that $M(A_\alpha) = M(A_\beta)$ for any two of them, cf.~\cite{HW_2012}. Also, either $M(A)=LM(A)$ and $M(A)=QM(A)$ at the same time, or $LM(A)$ is strictly larger than $M(A)$ and $QM(A)$ is strictly larger than $LM(A)$, cf.~\cite[Cor.~4.18]{Brown}. 

We would like to consider Hilbert C*-modules over (non-unital, in general) C*-algebras and, in particular, their multiplier modules and related structures. By convention all Hilbert C*-modules are right C*-modules, at the first glance. However, for full Hilbert $A$-modules one can obtain an operator-valued inner product turning them into full left Hilbert ${\rm K}_A(X)$-modules, and vice versa. So, the point of view decides which of the two Hilbert C*-module structures is primary and which is secondary. This kind of consideration will be used frequently in the present paper.

A pre-Hilbert $A$-module over a C*-algebra $A$ is an $A$-module $X$ equipped with an $A$-valued, non-degenerate mapping $ \langle .,.
\rangle : X \times X \to A$ being conjugate-$A$-linear in the first argument and $A$-linear in the second one, and satisfying $\langle x,x \rangle \geq 0$ for every $x \in X$. The map $\langle .,. \rangle$ is called the $A$-valued inner product on $X$. A pre-Hilbert $A$-module $\{ X, \langle .,. \rangle \}$ is Hilbert if and only if it is complete with respect to the norm $\| . \| = \| \langle .,. \rangle \|^{1/2}_A$. We always assume that the complex linear structures of
$A$ and $X$ are compatible. A Hilbert $A$-module $\{ X, \langle .,. \rangle \}$ over a C*-algebra $A$ is full if the norm-closed $A$-linear hull $\langle X,X \rangle$ of the range of the inner product coincides with $A$. Two (full) Hilbert $A$-modules $\{ X , \langle .,. \rangle_X \}$ and $\{ Y, \langle .,. \rangle_Y \}$ over a fixed C*-algebra $A$ are unitarily equivalent (or unitarily isomorphic) iff there exists a bounded invertible adjointable map $T: X \to Y$ such that $\langle .,. \rangle_X = \langle T(.),T(.) \rangle_Y$ on $X$. The $A$-dual Banach $A$-module $X'$ of a Hilbert $A$-module $X$ is defined as the set of all bounded $A$-linear maps from $X$ into $A$. It might not be a Hilbert $A$-module itself, cf.~\cite{F_1999_1}. But, $X$ is always canonically isometrically embedded into $X'$ as a Banach $A$-submodule via the identification of $x \in X$ with $\langle x, \cdot \rangle \in X'$. 
Note, that two $A$-valued inner products on a Hilbert $A$-module $X$ inducing equivalent norms on $X$ might not be unitarily isomorphic, cf.~\cite{Brown,F_1999_1}. Thus, full Hilbert C*-modules are always a triple of the C*-algebra $A$ of coefficients, the Banach $A$-module $X$ and the $A$-valued inner product on $X$. We omit the explicit reference to the C*-valued inner product in situations where its definition formula is standard or only its existence is important. 

We are interested in properties of sets of bounded $A$-linear operators between Banach and Hilbert C*-modules $X$. The set ${\rm End}_A(X)$ of all bounded module operators on Hilbert $A$-modules $X$ forms a Banach algebra, whereas the set ${\rm End}_A^*(X)$ of all bounded module operators which possess an adjoint operator inside ${\rm End}_A(X)$ has the structure of a unital C*-algebra. Note, that these two sets do not coincide in general, cf.~\cite{Paschke,F_1999_1}. An important subset of ${\rm End}_A^*(X)$ is the set ${\rm K}_A(X)$ of ''compact'' operators, which is defined as the norm-closure of the set of all finite linear combinations of elementary operators
\[
  \{ \theta_{a,b} \in {\rm End}_A(X) : a,b \in X \, , \:
  \theta_{a,b}(c) = b \langle a,c \rangle \quad {\rm for} \: {\rm every}
  \quad c \in X \}.
\]
It is a C*-subalgebra and a two-sided ideal of ${\rm End}_A^*(X)$. In contrast to the well-known situation for Hilbert spaces, the properties
of an operator of being ''compact'' or possessing an adjoint depend strongly on the choice of the $A$-valued inner product on $X$, i.e.~these
properties are not invariant for unitarily non-isomorphic C*-valued inner products on $X$ inducing equivalent norms, cf.~\cite{F_1999_1}.

We postpone a detailled introduction to multiplier modules of Hilbert C*-modules to the next section. Our standard references to Hilbert C*-module theory are \cite{Lance_95,RW_1998,Wegge-Olsen}.

Searching for intrinsic characterizations of Hilbert C*-modules, especially over non-unital C*-algebras, the notions of orthonormal bases and of frames for Hilbert spaces were rediscovered in the modular context of Hilbert C*-modules by D.~R.~Larson and the author during 2018-2022, \cite{FL_1999, 
FL_2000,FL_2002} and cf.~\cite{HJLM}. The new theory started with the norm-convergent case. Remarkably, there was a shift in significance towards modular frames, since far not all Hilbert C*-modules admit orthogonal bases. Also, certain classes of Hilbert C*-modules do not possess modular frames, however the most usual classes of Hilbert C*-modules do. Theory and applications have been developed and extended since then. The type of convergence of the defining series in the modular context has been widened to strict, weak, weak* types or algebraic order type of convergences in cases. 

In 2017 Lj.~Aramba{\v{s}}i\'c and D.~Baki\'c have made  a significant progress in the understanding of Hilbert C*-modules over non-unital C*-algebras. They used the strict completion picture to multiplier modules of $X$ and introduced so called outer frames of the multiplier modules $M(X)$ to extend the available sets of norm-convergent or strictly convergent modular frames of the related initial Hilbert C*-modules $X$. It turned out that all outer and inner frames of countably generated or algebraically finitely generated Hilbert C*-modules $X$ in the sense of strict convergence can be characterized by surjections of either ${\rm End}_A^*(l_2(A),X)$ or ${\rm End}_A^*(A^N,X)$ for some $N\in {\mathbb N}$, resp., \cite[Thm.~3.18, Thm.~3.19, Prop.~3.22, Prop.~3.23]{AB_2017}. By the way, the notion of countably generatedness of Hilbert C*-modules over non-unital C*-algebras has been formulated more precise. Lateron, M.~Naroei Irani and A.~Nazari made steps towards inner and outer modular woven frames, cf.~\cite{NI_N_2021}. 

We are partially interested in strong Morita equivalence of C*-algebras and in imprimitivity bimodules realizing those equivalences. A C*-correspondence from a C*-algebra $B$ to a C*-algebra $A$ (or an $B$-$A$ C*-correspondence) is a Hilbert module $X$ over $A$ together with a nondegenerate $*$-homomorphism from $B$ to ${\rm End}^*_A(X)$, which introduces a left $B$-module structure on $X$. For any $B$-$A$ C*-correspondence $X$ the actions extend uniquely to multiplier algebras of $B$ and of $A$ so that we may always treat $X$ as a $M(B)$-$M(A)$ C*-correspondence. To relate it to the multiplier $M(B)$-$M(A)$ bimodule $M(X)$, let us note that whenever $Y$ is a Hilbert $A$-module, then ${\rm End}^*_A(Y,X)$
is naturally a C*-correspondence from $M(B)$ to ${\rm End}^*_A(Y)$. In particular, ${\rm End}^*_A(Y,X)$ contains ${\rm K}_A(Y,X)$ as a sub-C*-correspondence.
An $B$-$A$ imprimitivity bimodule is a full $B$-$A$ C*-correspondence $X$ which is also a full left Hilbert $B$-module and the two $B$- resp. $A$-valued inner products satisfy the equality $\langle x,y \rangle_B z = x \langle y,z \rangle_A$ for an $x,y,z \in X$. In particular, for imprimitivity bimodules $X$, $X$ is a full $B$-$A$ C*-correspondence such that the left action $\phi: B \to  {\rm End}^*_A(X)$ restricts to a $*$-isomorphism of $B$ with ${\rm K}_A(X)$. 
Two C*-algebras $B$ and $A$ are said to be strongly Morita equivalent iff there exists an imprimitivity bimodule (or a $B$-$A$ imprimitivity bimodule) connecting them. It is a real equivalence relation, so one can form classes of strongly Morita equivalent C*-algebras sharing important properties among their elements. The interested reader is referred to \cite{RW_1998,EchRae_1995,Daws_2010,Daws_2011,BKMS_2024,Delfin_2024} to learn more about the theory of strong Morita equivalence of C*-algebras and about C*-correspondences or imprimitivity bimodules. 
If $X$ and $Y$ are, respectively, $B$-$A$ and $C$-$B$ imprimitivity bimodules, then their algebraic tensor product $X \odot_B Y$ is a $C$-$A$ imprimitivity bimodule, and one can define two inner products on  $X \odot_B Y$ by
\[
\langle x \otimes z, y \otimes t \rangle_A = \langle \langle y,x\rangle_B z, t \rangle_A \, , \,\, 
\langle x \otimes z, y \otimes t \rangle_C = \langle x, y \langle t,z\rangle_B \rangle_C
\]
for any $x,y \in X$, $z,t \in Y$. This way  $X \odot_B Y$ becomes an $C$-$A$ imprimitivity bimodule. 
Remarkably, the set of all $B$-$A$ imprimitivity bimodules connecting two given strongly Morita equivalent C*-algebras $A$ and $B$ can be quite manifold. Using the defined tensor product operation the problem burns down to the consideration of the Picard group ${\rm Pic} (A)$ of one C*-algebra $A$ of unitary isomorphism classes of $A$-$A$ imprimitivity bimodules for an arbitrarily selected element $A$ of a given class of strongly Morita eqivalent to $A$ C*-algebras. Note, that the inverse of an imprimitivity bimodule $X$ in ${\rm Pic}(A)$ is its dual module ${\tilde X}$. If $Y$ is an $B$-$A$ imprimitivity bimodule, then the map $X \to {\tilde Y} \otimes_A X \otimes_A Y$ induces an isomorphism of ${\rm Pic}(A)$ and ${\rm Pic}(B)$, and \cite[Theorem 1.2]{BGR_1977} shows that the Picard groups are stable isomorphism invariant. For a non-trivial example cf.~\cite{Kodaka_1997} for the Picard groups of irrational rotation algebras. For a comprehensive account see \cite[sec. 6-7]{BW}.

\section{On multiplier modules}

Let us show a way to define multiplier modules $M(X)$ of given (full) Hilbert $A$-modules $X$ as a certain related Hilbert $M(A)$-module, and let us provide some important properties of them. We start with a class of extensions of a given full Hilbert $A$-module $X$ as defined in \cite{BG_2004}:

\begin{dfn} (cf.~\cite[Def.~1.1]{BG_2004}) \label{dfn_extensions}
Let $X$ be a full Hilbert C*-module over a given (non-unital, in general) C*-algebra $A$. An extension of $X$ is a triple $(Y,B,\Phi)$ such that
\begin{itemize}
   \item[(i)$\,\,\,\,$] $B$ is a C*-algebra containing $A$ as a two-sided norm-closed ideal.
   \item[(ii)$\,\,$] $Y$ is a Hilbert $B$-module. 
   \item[(iii)] $\Phi: X \to Y$ is a bounded module map satisfying $\langle \Phi(x),\Phi(y) \rangle = \langle x,y \rangle$ for any $x,y \in X$.
   \item[(iv)] ${\rm Im}(\Phi) = YA = \{ za : z \in Y, a \in A \} = \{ x \in Y : \langle x,x \rangle \in A \}$ (by the Hewitt-Cohen factorization theorem, \cite[Thm.~4.1]{Pedersen_1998}, \cite[Prop.~2.31]{RW_1998}, \cite[Thm.~23.22]{Hewitt_Ross_1970}).
\end{itemize}
The triple $(Y,B,\Phi)$ is an essential extension of $X$ if $A$ is an essential ideal of $B$.
\end{dfn}

Note, that $\Phi$ is an $A$-linear isometry of Hilbert $A$-modules and, hence, in case of a surjective mapping a unitary map, preserving $A$-valued inner products up to unitary equivalence, cf.~\cite{Lance_1994}, \cite[Thm. 5]{F_1997}, \cite[Thm. 1.1]{Solel_2001}. So, $Y$ and $\Phi(Y)$ are unitarily equivalent Hilbert C*-modules. In the sequel we consider the C*-algebras $A$ and $B$ as a Hilbert $A$-module and as a Hilbert $B$-module over itself, respectively, setting $\langle a,b \rangle = a^*b$ for any two C*-algebra elements $a,b$. Then $A$ and $B$ are $*$-isometrically isomorphic to the C*-algebras ${\rm K}_A(A)$ and ${\rm K}_B(B)$, respectively. By \cite[Prop.~2.2.16]{Wegge-Olsen} these $*$-isomorphisms extend to $*$-isomorphisms of $M(A)$ and of $M(B)$ with the C*-algebras ${\rm End}_{M(A)}^*(A)$ and ${\rm End}_{M(B)}^*(B)$, respectively. We shall use these identifications freely. 

\begin{dfn} \label{dfn_multmod}
Let $X$ be a (not necessarily full) Hilbert C*-module over a given (non-unital, in general) C*-algebra $A$. Denote by $M(X)$ the set of all adjointable maps from $A$ to $X$, i.e.~$M(X) ={\rm End}_A^*(A,X)$. Obviously, $M(X)$ is a Hilbert $M(A)$-module with the $M(A)$-valued inner product $\langle z_1,z_2 \rangle = z_1^*z_2$ for $z_1, z_2 \in M(X)$. The resulting Hilbert $M(A)$-module norm coincides with the operator norm on $M(X)$. We call $M(X)$ the multiplier module of $X$. 
\end{dfn}

In \cite{BG_2004} $M(X)$ is shown to be an essential extension of $X$ identifying $X$ and ${\rm K}_A(A,X)$ as the subset $\{ za : z \in M(X), a \in A \}$ isometrically. In fact, $M(X)$ is the largest essential extension of $X$, because $M(A)$ is the largest essential extension of $A$ containing $*$-isomorphic copies of all C*-algebras $B$ which contain $*$-isomorphic copies of $A$ as an essential ideal, \cite[Thm.~1.2]{BG_2004}. This justifies the point of view on $M(X)$ as a Hilbert C*-module analog of the multiplier algebra in C*-theory. For a given Hilbert $A$-module $\{ X, \langle .,. \rangle_X \}$ the respective multiplier module $M(X)$ is unique up to unitary isomorphism of Hilbert $M(A)$-modules. 

Let us explain the definition of multiplier modules $M(X)$ for non-full Hilbert $A$-modules $X$. $\langle X,X \rangle$ is a two-sided norm-closed ideal of $A$. If $X$ is a non-full Hilbert $A$-module form the derived full Hilbert $A$-module $X_c$ by adding a copy of $A$ as an orthogonal direct summand to $X$, $X_c = X \oplus A$, $\langle .,. \rangle_c = \langle .,. \rangle_X+ \langle .,. \rangle_A$. Then construct its multiplier module $M(X_c)$. Using the mapping picture of Definition \ref{dfn_multmod} of $M(X_c)$ we see that $M(X_1 \oplus X_2) = M(X_1) \oplus M(X_2)$ using modular projection operators onto each of the orthogonal summands. So, we have an orthogonal decomposition $M(X_c) = M(X) \oplus M(A)$, taking the first orthogonal direct summand and Hilbert $M(A)$-module as the definition of the multiplier module $M(X)$ of $X$. Clearly, the (non-unital, in general) C*-algebra $\langle X,X \rangle$ is a two-sided norm-closed ideal of $A$, and the C*-algebra  $\langle M(X), M(X) \rangle$ is a two-sided norm-closed ideal of $M(A)$ containing $\langle X,X \rangle$. However, the latter might be non-unital, so we should be more careful with their C*-algebraic interrelations. In general,  $\langle X,X \rangle$ is a two-sided norm-closed ideal in $\langle M(X), M(X) \rangle$. The uniqueness results for the pairings $(X,M(X))$ are preserved.  It is important to realize, that $M(X)$ might depend on the choice of the C*-algebra acting on $X$, e.g. for $X=A$ with $A$ a non-unital C*-algebra we have $M_A(A)= {\rm End}_A^*(A,A)=M(A)$, but $M_{M(A)}(A)={\rm End}_{M(A)}^*(M(A),A)= A$. Last but not least, if the C*-algebra $A$ is non-unital then for $X=l_2(A)$ the multiplier module $M(l_2(A))$ equals to 
\[
   M(l_2(A))= \{ \{x_n\}_{n \in {\mathbb N}} : x_n \in M(A), \Sigma_n x_n^*x_n \,\ {\rm converges} \,\,{\rm strictly} \,\, {\rm w.r.t.} \,\, A \,\, {\rm in} \,\,M(A) \} \, ,
\]
cf.~\cite[Thm.~2.1]{BG_2004}. This gives a good non-trivial example, in particular, for certain non-$\sigma$-unital C*-algebras $A$, cf.~\cite[Ex.~2.2]{BG_2004}. In particular, $l_2(M(A))$ is smaller-equal than $M(l_2(A))$, generally speaking. For $A=K(l_2)$ and $X=l_2(A)$ one obtains $M(A)=B(l_2)$ and $M(X)= l_2(M(A))' \equiv  M(A) = B(l_2)$, the $M(A)$-dual Banach $M(A)$-module of $l_2(M(A))$, where
\[
   l_2(M(A))' = \left\{ m=\{ m_i\}_{i=1}^n : m_i \in M(A), \, \left\| \sum_{i=1}^n m_i^*m_i \right\| \leq K_m < \infty \,\, {\rm for} \, {\rm any} \,\, n \in {\mathbb N} \right\}
\]
 -- a self-dual Hilbert W*-module over $M(A)=B(l_2)$, because $A=K(l_2)$ is stable, cf.~\cite[Ex.~2.2]{BG_2004} and \cite{Paschke}.

\begin{prop} $\,$
\begin{enumerate}
  \item For a given pair of C*-algebras $(A,M(A))$, let $X_1,X_2$ be two full Hilbert C*-modules over $A$ such that their multiplier modules $M(X_1),M(X_2)$ are unitarily isomorphic as Hilbert $M(A)$-modules. Then $X_1$ and $X_2$ are unitarily isomorphic as Hilbert $A$-modules to $M(X_1)A \equiv M(X_2)A$. so, the pairings $(X, M(X))$ are bound to each other for given C*-algebras $(A,M(A))$ up to unitary equivalence.
  \item Suppose, we have two non-$*$-isomorphic C*-algebras $A_1$ and $A_2$ such that they admit the same multiplier C*-algebra $M(A)$. Let $X_1$ be a full Hilbert $A_1$-module and $X_2$ a full Hilbert $A_2$-module such that $M(X_1)$ and $M(X_2)$ are unitarily isomorphic as Hilbert $M(A)$-modules. Then $X_1$ is not unitarily isomorphic to $X_2$ as a Hilbert $M(A)$-module. 
\end{enumerate}
\end{prop}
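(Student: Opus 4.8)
The plan is to reduce both parts to the intrinsic description of $X$ inside its multiplier module recalled above. Since $M(X)$ is an essential extension of $X$, the canonical copy of $X$ in $M(X)$ is exactly $M(X)A=\{za:z\in M(X),\,a\in A\}=\{x\in M(X):\langle x,x\rangle\in A\}$, and this copy is unitarily equivalent to $X$ as a Hilbert $A$-module. Thus $X$ is recovered from the data $(M(X),\langle\cdot,\cdot\rangle,A)$ by a formula referring only to $M(X)$, its $M(A)$-valued inner product, and the ideal $A\trianglelefteq M(A)$. Part (i) then amounts to observing that a Hilbert $M(A)$-module isomorphism of the $M(X_i)$ automatically respects these recovered submodules, while part (ii) exploits that for non-$*$-isomorphic base algebras the recovered submodules span different ideals of $M(A)$.

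For (i): let $U\colon M(X_1)\to M(X_2)$ be a unitary isomorphism of Hilbert $M(A)$-modules. Being adjointable it is bounded and $M(A)$-linear, hence $A$-linear; it is surjective; it preserves the $M(A)$-valued inner products; and $U^{-1}=U^{*}$ is again adjointable and $A$-linear. Since $U$ preserves inner products, $\langle Ux,Ux\rangle=\langle x,x\rangle$ lies in $A$ if and only if $x$ does, so $U$ carries the canonical copy of $X_1$ exactly onto that of $X_2$; equivalently $U(M(X_1)A)=U(M(X_1))\,A=M(X_2)A$. Restricted to this submodule $U$ is $A$-linear, bijective and inner-product preserving, hence a unitary isomorphism of Hilbert $A$-modules; composing with the canonical identifications $X_i\cong M(X_i)A$ yields $X_1\cong X_2$ and exhibits the identification $M(X_1)A\equiv M(X_2)A$ asserted in the statement. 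It is worth pointing out explicitly where "same $A$" (not merely "same $M(A)$") is used, namely in the recovery formula $X_i=M(X_i)A$; this is precisely the step that fails in the setting of (ii).

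For (ii): one first has to say in what sense the $X_i$ are Hilbert $M(A)$-modules at all. They become such via the canonical embeddings $X_i\hookrightarrow M(X_i)$, which endow each $X_i$ with an $M(A)$-action and with the $A_i$-valued inner product now regarded as $M(A)$-valued; note that $X_i$ is \emph{not} full over $M(A)$, since $\langle X_i,X_i\rangle=A_i$ is a proper closed ideal of $M(A)$. Now suppose, towards a contradiction, that $V\colon X_1\to X_2$ is a unitary isomorphism of Hilbert $M(A)$-modules. Then $V$ preserves the $M(A)$-valued inner products, so the ranges of the two inner products, and hence their closed linear spans in $M(A)$, coincide; these spans are $A_1$ and $A_2$, whence $A_1=A_2$ as C*-subalgebras of $M(A)$, contradicting that $A_1$ and $A_2$ are not $*$-isomorphic. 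Hence no such $V$ exists. I note that the hypothesis $M(X_1)\cong M(X_2)$ is not actually used in this argument; it is retained in the statement only to sharpen the contrast with (i), showing that the conclusion of (i) genuinely fails once "same $A$" is relaxed to "same $M(A)$". I expect the only real subtlety to be this bookkeeping of module structures — making precise what "unitarily isomorphic as Hilbert $M(A)$-modules" means for the $X_i$, and checking that an $M(A)$-linear unitary of the multiplier modules automatically maps $X_1$ onto $X_2$ — both of which rest on the intrinsic characterization $X_i=\{x\in M(X_i):\langle x,x\rangle\in A\}$ and on adjointable maps being module maps; past that, everything is a routine transport of structure.
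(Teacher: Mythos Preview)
Your proof is correct and follows essentially the same approach as the paper. For (i) you use exactly the identification $X_i=M(X_i)A=\{x:\langle x,x\rangle\in A\}$ from Definition~\ref{dfn_extensions}(iv) and transport it along the given unitary, which is precisely what the paper's terse ``combining $\Phi_2$ with $\Phi_1^{-1}$'' means; for (ii) your observation that a unitary $M(A)$-module isomorphism would force $\overline{\langle X_1,X_1\rangle}=\overline{\langle X_2,X_2\rangle}$, i.e.\ $A_1=A_2$ inside $M(A)$, is the content of the paper's ``$M(X_1)A_1$ is obviously not isometrically isomorphic to $M(X_2)A_2$ by supposition.'' Your additional remarks---that the $M(A)$-module structure on $X_i$ must be specified via the embedding $X_i\hookrightarrow M(X_i)$, and that the hypothesis $M(X_1)\cong M(X_2)$ plays no role in the argument for (ii)---are accurate and helpful clarifications the paper leaves implicit.
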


The first assertion follows from (iv) of Definition \ref{dfn_extensions} combining  $\Phi_2$ with $\Phi_1^{-1}$ with respect to the set $\{ za : z \in M(X), a \in A \}$ in the largest essential extension $M(X)$, cf. \cite[Prop.~1.4]{RT2002}. The second assertion can be illustrated by example (see below), however $M(X_1)A_1$ is obviously not isometrically isomorphic to $M(X_2)A_2$ by supposition.

\begin{ex}
Let $H$ be an infinite-dimensional Hilbert space, and $K(H)$ and $B(H)$ the sets of compact linear operators and of bounded linear operators on it, respectively.  Consider the three C*-algebras
\[
   A_2 = \left(  \begin{array}{cc}  K(H) & 0 \\ 0 & K(H) \end{array} \right) , \,\,
   A_1 = \left(  \begin{array}{cc}  K(H) & 0 \\ 0 & B(H) \end{array} \right) , \,\,
\]
\[
   M(A_1) = M(A_2) = \left(  \begin{array}{cc}  B(H) & 0 \\ 0 & B(H) \end{array} \right) .
\]
Let us describe these C*-algebras as (extended) Hilbert C*-modules over $A_1$ and find their multiplier modules with respect to $A_1$.
\[
   X_2 =  \left(  \begin{array}{cc}  K(H) & 0 \\ 0 & K(H) \end{array} \right) \oplus \left(  \begin{array}{cc}  K(H) & 0 \\ 0 & B(H) \end{array} \right), \,\,
\]
\[
   X_1 =  \left(  \begin{array}{cc}  K(H) & 0 \\ 0 & B(H) \end{array} \right) \oplus \left(  \begin{array}{cc}  K(H) & 0 \\ 0 & B(H) \end{array} \right), \,\,
\]
\[
   M(X_2) = \left(  \begin{array}{cc}  B(H) & 0 \\ 0 & K(H) \end{array} \right) \oplus \left(  \begin{array}{cc}  B(H) & 0 \\ 0 & B(H) \end{array} \right) .
\]
\[
   M(X_1) = \left(  \begin{array}{cc}  B(H) & 0 \\ 0 & B(H) \end{array} \right) \oplus \left(  \begin{array}{cc}  B(H) & 0 \\ 0 & B(H) \end{array} \right) .
\]
By \cite[Prop.~1.1]{RT2002} both $X_1$ and $X_2$ admit a canonical isometric modular embedding into $M(X_1)$ and into $M(X_2)$, respectively, as Hilbert $M(A_1)$-submodules.  However, the multiplier module of $A_2$ with respect to $A_1$, $M_{A_1}(A_2)$, is a non-unital C*-algebra. 

Now, consider $X_1$ as a full Hilbert $A_1$-module and
\[
   X_3 =  \left(  \begin{array}{cc}  K(H) & 0 \\ 0 & K(H) \end{array} \right) \oplus \left(  \begin{array}{cc}  K(H) & 0 \\ 0 & K(H) \end{array} \right), \,\,
\]
as a full Hilbert $A_2$-module. Then $M(X_1)$ is unitarily isomorphic to $M(X_3)$ as a Hilbert $M(A_1) \equiv M(A_2)$-module, but $X_1$ and $X_3$ are not. 
\end{ex}

In general, for unital C*-algebras $A=M(A)$ any Hilbert $A$-module $X$ is its own multiplier module $M(X)=X$, \cite[Remark 1.11]{BG_2004}. To see that, an intrinsic topological characterization of multiplier modules for $\langle X,X \rangle \subseteq A$ being an essential ideal of $M(A)$  is useful. 

By \cite{BG_2004} there exists a suitable variant of a strict topology on multiplier modules $M(X)$: Let $A$ be a C*-algebra and $X$ be a Hilbert $A$-module.  Let the strict topology on $M(X)$ be induced jointly by the two families of semi-norms $\{\| z \to za \|: a \in A \}$ and $\{ \| \langle z,x \rangle \|: x \in X, \|x\| \leq 1 \}$ for $z \in M(X)$. It is a locally convex topology. The multiplier module $M(X)$ turns out to be complete with respect to this strict topology, and $M(X)$ is the strict completion of $X$, \cite[Thm.~1.8, 1.9]{BG_2004}. Moreover, the strict completion is an idempotent operation, i.e.~$M_{M(A)}(M_A(X))=M_A(X)$. Consequently, for unital C*-algebras $A=M(A)$ and Hilbert $A$-modules $X$ we have $X=M(X)$. The same is true whenever ${\rm K}_A(X)$ is unital, cf.~\cite[Cor.~2.9]{BG_2004}. 

For $X,Y$ Hilbert $A$-modules each operator $T \in {\rm End}_A^*(X,Y)$ has an extension $T_M \in {\rm End}_{M(A)}^*(M(X),M(Y))$ of the same norm value obtained as the strict continuation of $T$. Therefore, it is uniquely determined. Moreover, every operator in ${\rm End}_{M(A)}^*(M(X),M(Y))$ arises this way, i.e.~the C*-algebras ${\rm End}_{M(A)}^*(M(X),M(Y))$ and ${\rm End}_A^*(X,Y)$ are $*$-isomorphic, \cite[Thm.~2.3]{BG_2004}. Since a full Hilbert $A$-module $X$ is not only a right Hilbert $A$-module, but also a full left Hilbert ${\rm K}_A(X)$-module by the theory of strong Morita equivalence of the C*-algebras $A$ and ${\rm K}_A(X)$, $X$ can be considered as a C*-correspondence or as an imprimitivity bimodule of these two C*-algebras. By \cite{Kasparov_1980} the C*-algebra ${\rm End}_A^*(X)$ can be considered as the multiplier algebra of the C*-algebra ${\rm K}_A(X)$. Consequently, $M(X)$ is a full left Hilbert $M({\rm K}_A(X))$-module. So, for (full) Hilbert ${\rm K}_A(X)$-modules $X$ the Hilbert $M({\rm K}_A(X))$-module $M(X)$ can be identified with the (left) multiplier module of $X$ w.r.t. the pairing $({\rm K}_A(X), M({\rm K}_A(X)))$, too. This makes the property of a (full) Hilbert C*-module to be a multiplier module invariant under the choice of the point of view as a (full) left or right Hilbert C*-module. For similar thoughts compare with \cite[pp.~20-21, (a)-(b)]{BG_2004}, \cite[Props.~1.3, 1.6, Remark]{EchRae_1995}.

\begin{thm}
Let $A$ be a C*-algebra and $M(A)$ be its multiplier algebra. Let $X$ be a full (right) Hilbert $A$-module and $M(X)$ be its multiplier module, a full (right) Hilbert $M(A)$-module. Then $M(X)$ is also the full (left) multiplier module of the (left) Hilbert ${\rm K}_A(X)$-module $X$ with respect to the pairing of C*-algebras ${\rm K}_A(X)$ and $M({\rm K}_A(X))={\rm End}_A^*(X)=M({\rm K}_{M(A)}(M(X)))={\rm End}_{M(A)}^*(M(X))$, and vice versa. 
\end{thm}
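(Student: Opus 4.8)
The plan is to derive the statement from the results of \cite{BG_2004} quoted above together with Kasparov's identification ${\rm End}^{*}_{A}(X)=M({\rm K}_{A}(X))$ \cite{Kasparov_1980} and the standard description of multiplier algebras of linking algebras, by showing that the Hilbert $M(A)$-module $M(X)$ is precisely the largest essential extension --- in the sense of the left-handed analogue of Definition~\ref{dfn_extensions} --- of the left Hilbert ${\rm K}_{A}(X)$-module $X$ relative to the pair $({\rm K}_{A}(X),M({\rm K}_{A}(X)))$. I would begin by recording the chain of C*-algebra identifications in the statement: Kasparov's theorem applied to $X$ over $A$ and to $M(X)$ over $M(A)$ gives ${\rm End}^{*}_{A}(X)=M({\rm K}_{A}(X))$ and ${\rm End}^{*}_{M(A)}(M(X))=M({\rm K}_{M(A)}(M(X)))$, while \cite[Thm.~2.3]{BG_2004} gives ${\rm End}^{*}_{M(A)}(M(X))={\rm End}^{*}_{A}(X)$; hence all four algebras coincide, say with $\mathcal M$. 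Since the isomorphism of \cite[Thm.~2.3]{BG_2004} is the strict-continuation map $T\mapsto T_{M}$, it carries $\theta^{X}_{x,y}$ to $\theta^{M(X)}_{x,y}$, so ${\rm K}_{A}(X)$ embeds into ${\rm K}_{M(A)}(M(X))$ as a two-sided closed ideal --- essential, because both have multiplier algebra $\mathcal M$ --- compatibly with $X\subseteq M(X)$ and with both ${\rm K}$-valued inner products.

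Next I would check that $M(X)$ is an essential extension of the left Hilbert ${\rm K}_{A}(X)$-module $X$ over $({\rm K}_{A}(X),\mathcal M)$. Morita theory makes $M(X)$ a left Hilbert ${\rm K}_{M(A)}(M(X))$-module, and the left action extends to $\mathcal M=M({\rm K}_{M(A)}(M(X)))={\rm End}^{*}_{M(A)}(M(X))$, so $M(X)$ becomes a (non-full) left Hilbert $\mathcal M$-module with inner product valued in the ideal ${\rm K}_{M(A)}(M(X))\subseteq\mathcal M$. Of the four conditions of the left-handed Definition~\ref{dfn_extensions} for $({\rm K}_{A}(X),\mathcal M)$, (i) is the definition of $\mathcal M=M({\rm K}_{A}(X))$, (ii) has just been arranged, and (iii) holds since both the $\mathcal M$-action and the inner product restrict on $X$ to the original left ${\rm K}_{A}(X)$-structure by the previous paragraph. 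For (iv) one verifies ${\rm K}_{A}(X)\cdot M(X)=X=\{z\in M(X):{}_{{\rm K}_{M(A)}(M(X))}\langle z,z\rangle\in{\rm K}_{A}(X)\}$: for $x,y\in X$ and $m\in M(X)={\rm End}^{*}_{A}(A,X)$, the operator $\theta^{X}_{x,y}\circ m\in{\rm End}^{*}_{A}(A,X)$ sends $a$ to $y(m^{*}x)^{*}a$, i.e.\ is the element $y(m^{*}x)^{*}\in XA=X$ under $X\cong{\rm K}_{A}(A,X)$, giving ${\rm K}_{A}(X)\cdot M(X)\subseteq X$; the reverse inclusion follows from ${\rm K}_{A}(X)X=X$, and the second equality is a Hewitt--Cohen argument (if $\theta^{M(X)}_{z,z}\in{\rm K}_{A}(X)$ then $z\langle z,z\rangle\in X$, hence $zu_{n}\in X$ for $u_{n}(t)=t(nt+1)^{-1}$, and $z=\lim_{n}zu_{n}\in X$). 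Essentiality of ${\rm K}_{A}(X)$ in $\mathcal M$ is definitional, so the extension is essential.

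By the left-handed version of \cite[Thm.~1.2]{BG_2004} and the uniqueness of multiplier modules, it then remains only to see that $M(X)$ is the \emph{largest} essential extension of $X$ on the left. The cleanest route is the linking C*-algebra $L=L(X)$ of the imprimitivity bimodule $X$, with corners ${\rm K}_{A}(X)$, $X$, $\widetilde X$, $A$: one has
\[
   M(L)=\begin{pmatrix}{\rm End}^{*}_{A}(X)&M(X)\\ \widetilde{M(X)}&M(A)\end{pmatrix}
\]
(cf.~\cite[Props.~1.3, 1.6]{EchRae_1995}, \cite{RW_1998}), and, for the diagonal projections $p,q\in M(L)$ with $pLp={\rm K}_{A}(X)$, $qLq=A$, $pLq=X$, $pM(L)p={\rm End}^{*}_{A}(X)$, $qM(L)q=M(A)$, the single off-diagonal corner $pM(L)q=M(X)$ is at once the right multiplier module of $X$ over $(A,M(A))$ and the left multiplier module of $X$ over $({\rm K}_{A}(X),{\rm End}^{*}_{A}(X))$. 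Alternatively, avoiding this external input, $M(X)$ is the strict completion of $X$ for the right strict topology of \cite[Thm.~1.8]{BG_2004} and the left multiplier module is its strict completion for the left strict topology; these topologies agree on norm-bounded subsets of $X$ --- each left strict seminorm, $z\mapsto\|\theta^{X}_{x,y}z\|$ and $z\mapsto\|{}_{{\rm K}_{A}(X)}\langle x,z\rangle\|$, is dominated by and dominates finitely many right strict seminorms, $z\mapsto\|za\|$ and $z\mapsto\|\langle z,x\rangle_{A}\|$, via density of $\langle X,X\rangle$ in $A$ and of ${\rm span}\{\theta^{X}_{x,y}\}$ in ${\rm K}_{A}(X)$ --- so the strict completions, hence $M(X)$ and the left multiplier module, coincide. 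The ``vice versa'' needs no separate argument: it is the same assertion with left and right interchanged, and applying the construction to the left Hilbert ${\rm K}_{A}(X)$-module $X$ (with ${\rm K}$-algebra $A$) reproduces the pair $(X,M(X))$.

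The step I expect to be the main obstacle is exactly this last maximality claim. That $M(X)$ is \emph{an} essential extension of $X$ on the left is easy, but identifying it with \emph{the} left multiplier module forces a genuine reconciliation of the left and right module structures; the linking-algebra identity for $M(L(X))$ packages that reconciliation at the price of an external fact, while the bare-hands alternative hinges on the somewhat delicate comparison of the two strict topologies on bounded subsets of $X$. A minor but necessary point to state explicitly along the way is that the left-handed analogues of the relevant definitions and theorems of \cite{BG_2004} hold verbatim, since none of the arguments there depends on which side the module structure is written.
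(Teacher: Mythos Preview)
Your proposal is correct, and your alternative route (b) --- comparing the left and right strict topologies on bounded sets and invoking the strict-completion characterization of multiplier modules from \cite[Thm.~1.8, 1.9]{BG_2004} --- is exactly the paper's argument. The only difference is cosmetic: the paper does not compare the two families of seminorms directly but instead quotes \cite[Def.~2, Remark~3]{B_2004} to identify the right strict topology on bounded sets with the ``operator strict topology'' (seminorms $z\mapsto\|Tz\|$ for $T\in{\rm K}_A(X)$ and $z\mapsto\|za\|$ for $a\in A$), observes that this intermediate topology is manifestly symmetric in $A$ and ${\rm K}_A(X)$, and then applies the same citation with the roles swapped. Your direct seminorm comparison achieves the same thing without the detour.

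Your route (a) via the linking algebra and the identity $M(L(X))=\bigl(\begin{smallmatrix}{\rm End}^*_A(X)&M(X)\\ \widetilde{M(X)}&M(A)\end{smallmatrix}\bigr)$ is a genuinely different packaging that the paper only alludes to in the references to \cite{EchRae_1995,BG_2004} after the theorem statement; it trades the topology bookkeeping for an external structural fact but makes the left/right symmetry immediate. Your explicit verification of the essential-extension conditions (i)--(iv) on the left is more detailed than anything in the paper's proof, which simply relies on the completion picture; note a small slip in your Hewitt--Cohen step, where $u_n(t)=t(nt+1)^{-1}$ should presumably be $u_n(t)=nt(nt+1)^{-1}$ so that $zu_n(\langle z,z\rangle)\to z$.
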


\begin{proof}
By  \cite[Thm.~1.8, 1.9]{BG_2004} the unit ball of $M(X)$ is complete w.r.t. the locally convex strict (right) topology induced jointly by the two families of semi-norms $\{ \| z \to za \|: a \in A \}$ and $\{ \| \langle z,x \rangle_r \|: x \in X, \|x\| \leq 1 \}$ for $z \in M(X)$. Also, the unit ball of $X$ generates the unit ball of $M(X)$ strictly. By \cite[Def.~2, Remark 3]{B_2004} the operator strict topology on $X$ defined by the joint family of semi-norms $\{\| x \to T(x) \| :  T \in {\rm K}_A(X)\}$ and $\{ \|  x \to xa \|: a \in A \}$ with $x \in X$ coincides with the strict (right) topology on bounded sets of $X$ and of $M(X)$, and so on unit balls, in particular. Consequently, $X$ is dense in $M(X)$ w.r.t. the operator strict topology on $X$ and on $M(X)$. Note, that the operator strict topology is symmetric for imprimitivity bimodules $X$ in $A$ and ${\rm K}_A(X)$. 
By strong Morita equivalence via the imprimitivity bimodule $X$ we can symmetrically conclude, that the unit ball of $X$ is dense w.r.t. the locally convex strict (left) topology induced jointly by the two families of semi-norms $\{ \| z \to T(z) \|: T \in {\rm K}_A(X) \}$ and $\{ \| \langle z,x \rangle_l \|: x \in X, \|x\| \leq 1 \}$ for $z \in M(X)$. This gives the argument by \cite[Thm.~1.8, 1.9]{BG_2004} applied to the operator point of view on $X$ and on $M(X)$.
The $*$-isomorphisms of the respective operator C*-algebras are shown in \cite[Thm.~2.3]{BG_2004}.
\end{proof}

\begin{rmk}
Let us check the idea of a stronger kind of strong Morita equivalence restricting the set to multiplier module imprimitivity bimodules. There is a good class of pairwise strongly Morita equivalent C*-algebras $\{{\mathbb C}, M_n({\mathbb C}), K(H): n \in {\mathbb N}, H \,\, {\rm any} \, {\rm infinite{-}dimensional} \,\, {\rm Hilbert} \, {\rm space} \}$, because it has been well investigated in \cite{B_2004,BG_2004,Guljas_2021}. One has multiplier imprimitivity bimodules whenever the left and/or right Hilbert C*-module structure of imprimitivity bimodules involves  $\{{\mathbb C}, M_n({\mathbb C}):n \in {\mathbb N} \}$. The C*-algebras $\{ K(H):  H \,\, {\rm any} \, {\rm infinite{-}dimensional} \,\, {\rm Hilbert} \, {\rm space} \}$ may appear among the strongly Morita equivalent ones at the other end. What about imprimitivity bimodules $X$ connecting C*-algebras of type $K(H)$  for infinite-dimensional Hilbert spaces $H$? The minimal requirement to an equivalence relation is that an object has to be equivalent to itself. Take $K(H)$ and a full (left) Hilbert $K(H)$-module serving as a imprimitivity bimodule $X$ of $K(H)$ with itself, i.e. with the structure of a full (right) Hilbert $K(H)$-module. Then the full multiplier C*-module $M(X)=B(H)$ of $X$ is a imprimitivity bimodule of $B(H)$ with itself and does not belong to the set of $K(H)$-$K(H)$ imprimitivity bimodules any more. So, the set of $K(H)$-$K(H)$ imprimitivity bimodules does not contain any multiplier module, similarly for Hilbert spaces $H$ of pairwise non-isomorphic infinite-dimensional dimensions. The concept does not work.
\end{rmk}


\section{On modular operators and functionals}

The aim of this section is the investigation of the Banach algebras of all bounded module maps ${\rm End}_A(X)$ and ${\rm End}_{M(A)}(M(X))$ and their interrelations, as well as the sets of all bounded module maps $X'$ and $M(X)'$ over pairs of (full) Hilbert $A$-modules $X$ and their multiplier modules $M(X)$ over $M(A)$. To get non-trivial examples we need examples of C*-algebras $A$ such that their multiplier algebras $M(A)$ are strictly smaller than their left/right multiplier algebras. By \cite[Cor.~4.18]{Brown} either $M(A)=LM(A)$ and $M(A)=QM(A)$ at the same time, or $LM(A)$ is strictly larger than $M(A)$ and $QM(A)$ is strictly larger than $LM(A)$. For theory and examples see the existing literature on different types of multiplier algebras  and local multiplier algebras, e.g.~\cite{BMSh_1994}. We give a simple example following \cite[pp. 165-166]{Lin_1989}. 

Recall the C*-algebras $c_0$, $c$ and $l_\infty$ of all complex-valued sequences converging to zero, converging at all and being bounded in norm, respectively. Change the target C*-algebra $\mathbb C$ to the C*-algebra of all two-by-two valued matrices $M_2({\mathbb C})$. Consider the C*-algebra of all $M_2({\mathbb C})$-valued sequences with the sequence in the upper left corner converging at all and with the sequences derived from the other three positions being sequences converging to zero. We write $A$ as a symbol 
\[
 A = \left(   
\begin{array}{cc}
  c & c_0 \\ c_0 & c_0  
\end{array}
\right) \, .
\]

Then we can find the derived (left/right/two-sided) multiplier algebras / spaces:
\[
 M(A) = \left(   
\begin{array}{cc}
  c & c_0 \\ c_0 & l_\infty 
\end{array}
\right) \, ,  \,\, 
 LM(A) = RM(A)^* = \left(   
\begin{array}{cc}
  c & l_\infty \\ c_0 & l_\infty 
\end{array} 
\right) \, ,  
\]
\[
 QM(A) = \left(   
\begin{array}{cc}
  c & l_\infty \\ l_\infty & l_\infty 
\end{array} 
\right) \, .
\]
This can be calculated in the W*-algebra of all bounded $M_2({\mathbb C})$-valued sequences. Note, that $QM(A) = LM(A) + RM(A) \not= RM(A) \circ LM(A)$ for this particular example, a non-general situation. 

\begin{thm} 
Let $A$ be a C*-algebra with multiplier algebra $M(A)$. Let $X$ be a full Hilbert $A$-module and $M(X)$ be its full multiplier module.
\begin{enumerate}
  \item The C*-algebra ${\rm K}_A(X)$ of all ''compact'' operators on $X$ admits a $*$-isomor\-phic embedding into the C*-algebra ${\rm K}_{M(A)}(M(X))$ of all ''compact'' operators on $M(X)$. ${\rm K}_A(X)$ is smaller than ${\rm K}_{M(A)}(M(X))$ if $X \neq M(X)$. Nevertheless, their multiplier algebras are $*$-isomorphic, i.e. ${\rm End}_A(X)^* \cong {\rm End}_{M(A)}(M(X))^*$. If $X \neq M(X)$ then the embedding is not a surjection. 
  \item There does not exist any bounded $M(A)$-linear map $T_0: M(X) \to M(X)$ such that $T_0 \neq 0$ on $M(X)$, but $T_0 =0$ on $X \subseteq M(X)$.
  \item The Banach algebra ${\rm End}_{M(A)}(M(X))$ admits an isometric embedding into the Banach algebra ${\rm End}_A(X)$ by restricting an element on the domain from $M(X)$ to $X\subseteq M(X)$. If the left multiplier algebra of ${\rm K}_A(X)$ is larger than the multiplier algebra of it, then ${\rm End}_{M(A)}(M(X))$ can be smaller than ${\rm End}_A(X)$, i.e. not every bounded module operator on $X$ might admit an bounded module operator continuation on $M(X)$.  
\end{enumerate}
\end{thm}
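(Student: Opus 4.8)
The plan is to treat the three items separately, since each draws on a different piece of the earlier machinery, and to rely throughout on the identification $M(X) = {\rm End}_A^*(A,X)$ together with the strict-density of $X$ in $M(X)$ and the idempotency of the strict completion. For item (1), the $*$-isomorphism ${\rm End}_A^*(X,Y) \cong {\rm End}_{M(A)}^*(M(X),M(Y))$ of \cite[Thm.~2.3]{BG_2004} applied with $Y = X$ already gives ${\rm End}_A^*(X) \cong {\rm End}_{M(A)}^*(M(X))$, and one recalls that ${\rm End}_A^*(X)$ is the multiplier algebra of ${\rm K}_A(X)$ by \cite{Kasparov_1980}, likewise ${\rm End}_{M(A)}^*(M(X)) = M({\rm K}_{M(A)}(M(X)))$; so the multiplier algebras agree on the nose. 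The $*$-embedding ${\rm K}_A(X) \hookrightarrow {\rm K}_{M(A)}(M(X))$ is then realised concretely: the finite-rank operator $\theta_{a,b}$ with $a,b\in X$ extends strictly to $\theta_{a,b}$ acting on $M(X)$, which is again ''compact'' on $M(X)$; taking norm-closures and using that the extension is isometric (same norm value, \cite[Thm.~2.3]{BG_2004}) gives a $*$-isometric embedding. That it is a proper embedding when $X \neq M(X)$ follows because $M({\rm K}_A(X))$ already equals both multiplier algebras, and if the embedding were onto then ${\rm K}_A(X) = {\rm K}_{M(A)}(M(X))$ would be its own multiplier algebra, hence unital; but by \cite[Cor.~2.9]{BG_2004} ${\rm K}_A(X)$ unital forces $X = M(X)$, a contradiction.

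For item (2), suppose $T_0 \in {\rm End}_{M(A)}(M(X))$ vanishes on $X$; I would fix $z \in M(X)$ and a net $x_\lambda \in X$ converging strictly to $z$, and note that $T_0(x_\lambda) = 0$. The obstacle is that $T_0$ need not be strictly continuous (it is only norm-bounded and $M(A)$-linear), so one cannot simply pass to the limit on the left. The way around this is to test against the right $A$-action: for any $a \in A$ one has $z a \in X$ (since $X = M(X)A$), hence $T_0(z a) = 0$, and by $M(A)$-linearity $T_0(z)a = T_0(za) = 0$ for every $a \in A$. Thus $\langle T_0(z), T_0(z)\rangle a = 0$ for all $a\in A$ after pairing appropriately — more directly, $T_0(z) \in M(X)$ satisfies $T_0(z)\cdot A = \{0\}$, and since $M(X) = {\rm End}_A^*(A,X)$ an element killing all of $A$ on the right is the zero map. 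Hence $T_0(z) = 0$ for all $z$, i.e.\ $T_0 = 0$. I expect this to be the cleanest of the three.

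For item (3), the embedding ${\rm End}_{M(A)}(M(X)) \hookrightarrow {\rm End}_A(X)$ is defined by restriction of the domain: given $S \in {\rm End}_{M(A)}(M(X))$, its restriction $S|_X$ maps $X$ into $X$ because $X = M(X)A$ and $S(za) = S(z)a \in M(X)A = X$; it is $A$-linear and bounded with $\|S|_X\| \le \|S\|$. The reverse inequality $\|S\| \le \|S|_X\|$ uses strict density: $X$ is strictly dense in $M(X)$, the strict topology restricted to the unit ball is the operator strict topology, and $S$ is strictly continuous on bounded sets (this needs to be argued — any bounded module operator that is the strict continuation of its restriction is automatically strict-to-strict continuous on balls, or one argues via $\|S(z)\| = \sup_{\|a\|\le 1}\|S(z)a\| = \sup\|S(za)\| \le \|S|_X\|\,\|z\|$, the last step because $za \in X$ with $\|za\|\le\|z\|$). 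This last displayed chain is in fact the crux and makes the embedding isometric without needing any continuity lemma. Injectivity of the restriction map is precisely item (2). Finally, to see the embedding can be proper, I would invoke the identification of ${\rm End}_A(X)$ with a ''left multiplier'' type object over ${\rm K}_A(X)$: for $X = A$ one has ${\rm End}_A(A) = {\rm RM}(A)^* = LM(A)$ while ${\rm End}_{M(A)}(M(A)) = {\rm End}_{M(A)}(M(A)) = M(A)$ (since $M(A)$ is unital as a Hilbert module over itself), so whenever $LM({\rm K}_A(X)) \supsetneq M({\rm K}_A(X))$ — which happens already for $X = A$ with $A$ the algebra $\begin{pmatrix} c & c_0 \\ c_0 & c_0\end{pmatrix}$ of the preceding example, where $LM(A) \ne M(A)$ — the inclusion is strict, and an element of $LM(A) \setminus M(A)$ gives a bounded module operator on $X = A$ with no bounded module continuation to $M(X) = M(A)$. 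The main obstacle across the whole theorem is making the isometry and the failure-of-surjectivity precise simultaneously; the resolution is that all the "might be smaller" claims reduce, via the pairing $X \leftrightarrow ({\rm K}_A(X), M({\rm K}_A(X)))$ of the previous theorem, to the already-known strict inequalities ${\rm K}(X) \subsetneq M({\rm K}(X))$ and $LM \supsetneq M$ for suitable base algebras, so no genuinely new inequality has to be established — only transported.
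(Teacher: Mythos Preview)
Your proof follows essentially the same route as the paper for all three items: the embedding of compacts via strict extension of rank-one operators and the $*$-isomorphism of multiplier algebras from \cite[Thm.~2.3]{BG_2004} for (i); the $T_0(z)a = T_0(za) = 0$ trick for (ii); and restriction plus the $X = A$ example for (iii). Your isometry argument in (iii), via $\|S(z)\| = \sup_{\|a\|\le 1}\|S(z)a\| = \sup_{\|a\|\le 1}\|S(za)\| \le \|S|_X\|\,\|z\|$, is in fact more transparent than the paper's, which appeals somewhat loosely to strict density and to the adjointable case.

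There is, however, one gap. In (i) you argue that if the embedding ${\rm K}_A(X)\hookrightarrow{\rm K}_{M(A)}(M(X))$ were onto, then ``${\rm K}_A(X)={\rm K}_{M(A)}(M(X))$ would be its own multiplier algebra, hence unital.'' This step is not justified: idempotency of the multiplier-module construction $M_{M(A)}(M(X))=M(X)$ does \emph{not} say ${\rm K}_{M(A)}(M(X))=M({\rm K}_{M(A)}(M(X)))$, and knowing $M({\rm K}_A(X))=M({\rm K}_{M(A)}(M(X)))$ together with ${\rm K}_A(X)={\rm K}_{M(A)}(M(X))$ is a tautology, not a unitality statement. (The paper itself does not supply a proof of the proper-inclusion claim.) A correct argument: every operator in the image of ${\rm K}_A(X)$ has range contained in $X$, since for $x,y\in X$ and $w\in M(X)$ one has $\theta_{x,y}(w)=y\langle x,w\rangle\in X\cdot M(A)\subseteq X$ (use $y=y'a$ with $a\in A$), and this persists under norm limits. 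But for $z\in M(X)\setminus X$ the operator $\theta_{z,z}$ satisfies $\theta_{z,z}(z)=z\langle z,z\rangle\notin X$, because $z\langle z,z\rangle\in X$ would force $\langle z,z\rangle^3\in A$, hence $\langle z,z\rangle\in A$ by functional calculus, hence $z\in X$ by Definition~\ref{dfn_extensions}(iv).
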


\begin{proof}
Since $X \subseteq M(X)$ elementary ''compact'' operators on $X$ can be extended to $M(X)$ preserving their operator norm by the strict density of $X$ in $M(X)$. The C*-algebras of ''compact'' operators on Hilbert C*-modules are generated linearly by elementary operators w.r.t.~the operator norm. So the isometric $*$-isomorphic embedding of the C*-algebra ${\rm K}_A(X)$ into the C*-algebra ${\rm K}_{M(A)}(M(X))$ follows. However, the multiplier C*-algebras $M({\rm K}_A(X)) = {\rm End}_A(X)^*$ and $M({\rm K}_{M(A)}(M(X))) = {\rm End}_{M(A)}(M(X))^*$ are always $*$-isomorphic by \cite[Thm.~2.3]{BG_2004}.

Suppose, there exists a bounded $M(A)$-linear operator $T_0$ on $M(X)$ such that $T_0=0$ on $X \subseteq M(X)$, but $T_0(m) \neq 0$ for some $m \in M(X)$. Let $\{ x_\alpha: \alpha \in I\}$ be a net of elements of $X$ converging strictly to $m$, i.e.~the nets $\{ x_\alpha a: \alpha \in I\}$ converge to $ma \in X$ in norm for any $a \in A$. Consider the set $\{ \langle n, T_0(ma) \rangle : a \in A, n \in M(X) \}$. All these values are equal to zero by supposition. Since  $\langle n, T_0(ma) \rangle =  \langle n, T_0(m) \rangle a = 0$ for any $a \in A$ and $A$ is an essential ideal of $M(A)$ we conclude $\langle n, T_0(m) \rangle = 0$ for any $n \in M(X)$, forcing $T_0(m) = 0$, a contradiction to our assumption.

Restricting a bounded $M(A)$-linear operator on $M(X)$ on the domain from $M(X)$ to $X\subseteq M(X)$ one obtains a bounded $A$-linear operator on $X$. The norm is preserved since $X$ is strictly dense in $M(X)$, there does not exist any non-trivial bounded module operator on $M(X)$ vanishing on $X \subseteq M(X)$ and the norm is preserved on the subalgebra ${\rm End}_{(M(A)}(M(X))^*\equiv {\rm End}_A(X)^*$ by \cite[Thm.~2.3]{BG_2004}.  So, the restriction of a non-adjointable operator is a non-adjointable operator, again. In case, the left multiplier algebra $LM({\rm K}_A(X))= {\rm End}_A(X)$ is larger than the multiplier algebra $M({\rm K}_A(X))= {\rm End}_A(X)^*$ (see example above) not all elements of $LM({\rm K}_A(X)) \setminus M({\rm K}_A(X))$ are extendable to bounded module operators on $M(X)$:  Indeed, if $A = X$ is a C*-algebra with $LM(A) \supset M(A)$ then $M(X) = M(A)$ and all elements of $LM(A) \setminus M(A)$ cannot be continued from $X \subset M(X)$ to $M(X)$, cf. \cite[Thm.~1.5, 1.6]{Lin_1991}, \cite[Cor.~4.18]{Brown}.
\end{proof}

We found that not any bounded module operator on a Hilbert C*-module might admit a continuation to a bounded module operator of the same operator norm value on its multiplier module. However, if such a continuation with the same operator norm value exists it is unique.

\begin{rmk} \label{non-extend}
We demonstrate by example that the choice of the $A$-valued inner product on Hilbert $A$-modules $X$ within the class of $A$-valued inner products on $X$ inducing equivalent norms on $X$ may lead to other unitarily non-equivalent multiplier $M(A)$-modules. Return to the example at the beginning of the present section. If the C*-algebra $A$ defined there is equipped with the standard $A$-valued inner product as a Hilbert $A$-module $X$ then $M(X)=M(A)$. Now, modify this $A$-valued  inner product setting $\langle .,. \rangle_1 := \langle T(.),T(.) \rangle_A$ for 
\[
    T :=  \left( \begin{array}{cc} {1} & {1}\\ {0} & {1} \end{array} \right) \in LM(A) \setminus M(A) \, ,
\]
where $0$ is the zero sequence and $1$ is the identity sequence.
Clearly, $T$ is a non-adjointable invertible bounded $A$-linear operator on $A$, and $\langle .,. \rangle_1$ is an $A$-valued inner product on $A$ inducing an equivalent Hilbert module norm on $A = X$. A simple calculation for elements of $M(A)$ inside the C*-valued inner product $\langle .,. \rangle_1$ yields
\[
   \left\langle \left( \begin{array}{cc} {1} & {1}\\ {0} & {1} \end{array} \right) \circ
                   \left( \begin{array}{cc} c & c_0 \\ c_0 & l_\infty \end{array} \right) , 
                   \left( \begin{array}{cc} {1} & {1} \\ {0} & {1} \end{array} \right) \circ
                   \left( \begin{array}{cc} c & c_0 \\ c_0 & l_\infty \end{array} \right) 
   \right\rangle_A
   =
   \left( \begin{array}{cc} c & c \\ c & l_\infty \end{array} \right)  \not\in M(A) \, .
 \]
Consequently, the $A$-valued inner product $\langle .,. \rangle_1$ cannot be extended to $M(A)$, and $M(A)$ is not the multiplier module of the Hilbert $A$-module  $X_1=\{ A, \langle .,. \rangle_1\}$. So, the Banach $A$-module $A$ of the concrete example does not determine its multiplier module alone, one has to take into account the particular $A$-valued inner product on it.  
To calculate the multiplier module of $X_1$ one can use the von Neumann algebra of all  bounded $M_2({\mathbb C})$-valued sequences as an environment. Then
\begin{eqnarray*}
   \langle S(x),a \rangle_A & = & \langle x, S^*(a) \rangle_1 \\
                                      & = & \langle T(x), T(S^*(a)) \rangle_X \\
                                      & = & \langle T(x), (T^{-1})^*T^* (T(S^*(a)) \rangle_X \\
                                      & = & \langle T^{-1}T(x), T^* (T(S^*(a)) \rangle_X \\
                                      & = & \langle x, (T^*TS^*)(a) \rangle_X 
\end{eqnarray*}
for any $S \in {\rm End}_{A,1}^*(X,A)$, any $x \in X$, $a \in A$. Thus, $T^*TS^* \in {\rm End}_{A}^*(A,X) = \{ M(A), \langle .,. \rangle_{M(A)} \}$. 
\end{rmk}

In 2022 J.~Kaad and M.~Skeide published an example of a singular extension of the zero bounded C*-linear functional on a Hilbert C*-submodule $Y$ in a Hilbert C*-module $X$ where the orthogonal complement of $Y$ in $X$ was supposed to be the zero element of $X$, cf.~\cite{KS}. The author proved that such phenomena cannot appear for Hilbert C*-modules over monotone complete C*-algebras and for maximal one-sided ideals of C*-algebras, cf.~\cite{F_2024}. We should evaluate the pairs $(X,M(X))$ under consideration.

\begin{thm} \label{funct}
Let $A$ be a C*-algebra with multiplier algebra $M(A)$. Let $X$ be a full Hilbert $A$-module and $M(X)$ be its full multiplier module.
\begin{enumerate}
  \item There does not exist any bounded $M(A)$-linear map $f_0: M(X) \to M(A)$ such that $f_0 \neq 0$ on $M(X)$, but $f_0 =0$ on $X \subseteq M(X)$.
  \item The Banach $M(A)$-module $M(X)'_{M(A)}$ admits an isometric modular embedding into the Banach $A$-module $X'_A$ by restricting an element on the domain from $M(X)$ to $X\subseteq M(X)$. There exist examples such that $X'_A$ is strictly larger than the embedded copy of $M(X)'_{M(A)}$.
\end{enumerate}
\end{thm}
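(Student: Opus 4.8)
The plan is to follow the template of the previous theorem very closely, since parts (1)--(2) here are the functional-analogues of parts (2)--(3) there. For part (1), suppose $f_0 \colon M(X) \to M(A)$ is bounded and $M(A)$-linear with $f_0|_X = 0$. Fix $m \in M(X)$ and take a net $\{x_\alpha\}$ in $X$ converging strictly to $m$; then $x_\alpha a \to ma$ in norm for every $a \in A$, and since $ma \in XA \subseteq X$ we get $f_0(ma) = 0$. By $M(A)$-linearity $f_0(m)a = f_0(ma) = 0$ for every $a \in A$. Because $A$ is an essential ideal of $M(A)$, an element $b \in M(A)$ with $bA = \{0\}$ (equivalently $Ab^* = \{0\}$, equivalently $b^*A = \{0\}$ after taking adjoints) must be zero; applying this with $b = f_0(m)$ forces $f_0(m) = 0$. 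Hence $f_0 = 0$, proving (1).

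For part (2), the restriction map $\rho \colon M(X)'_{M(A)} \to X'_A$ sends $g \mapsto g|_X$. This is well-defined and $\rho(g)$ is bounded $A$-linear on $X$ with $\|\rho(g)\| \le \|g\|$. It is $M(A)$-linear in the obvious sense (left multiplication by $M(A)$ on functionals), and it is injective precisely by part (1): if $g|_X = 0$ then $g = 0$. To see it is isometric, note that since $X$ is strictly dense in $M(X)$ and any $g \in M(X)'_{M(A)}$ is automatically strictly continuous on bounded sets (this follows from the two families of semi-norms defining the strict topology on $M(X)$, cf.~\cite[Thm.~1.8, 1.9]{BG_2004}, together with boundedness of $g$), the value $g(m)$ for $m$ in the unit ball of $M(X)$ is a strict limit of values $g(x_\alpha)$ with $x_\alpha$ in the unit ball of $X$; hence $\|g\| = \sup_{\|x\|\le 1}\|g(x)\| = \|\rho(g)\|$. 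Thus $\rho$ is an isometric modular embedding.

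For the final clause --- that $X'_A$ can be strictly larger than $\rho\big(M(X)'_{M(A)}\big)$ --- I would recycle the concrete C*-algebra $A = \left(\begin{smallmatrix} c & c_0 \\ c_0 & c_0 \end{smallmatrix}\right)$ from the start of the section, taken with its standard inner product as $X = A$, so that $M(X) = M(A) = \left(\begin{smallmatrix} c & c_0 \\ c_0 & l_\infty \end{smallmatrix}\right)$ and $X'_A = {\rm End}_A(A,A) = LM(A) = \left(\begin{smallmatrix} c & l_\infty \\ c_0 & l_\infty \end{smallmatrix}\right)$, while $M(X)'_{M(A)} = {\rm End}_{M(A)}(M(A),M(A)) = M(A)$. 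An element of $LM(A) \setminus M(A)$, e.g.~$\left(\begin{smallmatrix} 0 & 1 \\ 0 & 0 \end{smallmatrix}\right)$ where $1$ is the identity sequence, is a bounded $A$-linear functional $X \to A$ that does not extend to a bounded $M(A)$-linear functional on $M(X) = M(A)$, since such an extension would have to lie in $LM(M(A)) = M(A)$ and agree with it on the strictly dense ideal $A$, which is impossible because the two disagree already at the level of $LM(A)$ versus $M(A)$; uniqueness of any strict extension (cf.~\cite[Thm.~1.5, 1.6]{Lin_1991}) then yields the contradiction. This gives $X'_A \supsetneq \rho\big(M(X)'_{M(A)}\big)$.

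The main obstacle I anticipate is the isometry assertion in part (2): establishing that every bounded $M(A)$-linear functional on $M(X)$ is automatically strictly continuous on the unit ball, so that its norm is already attained (as a supremum) on the strictly dense unit ball of $X$. For adjointable operators this was handled in \cite[Thm.~2.3]{BG_2004} via strict continuation, but a general bounded modular functional need not be adjointable, so one must argue directly from the semi-norm description of the strict topology --- the semi-norm $z \mapsto \|\langle z, x\rangle\|$ controls $\|g(z)\|$ only after one knows $g$ interacts well with the inner product, which is exactly what must be checked. Everything else is a routine transcription of the operator case.
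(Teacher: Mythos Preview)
Your proof of part (i) is essentially identical to the paper's: both use $f_0(ma)=f_0(m)a=0$ for all $a\in A$ together with $A$ essential in $M(A)$. Your treatment of part (ii) and the example $X=A$, $X'=LM(A)$, $M(X)'=M(A)$ is likewise the paper's own route, and your concrete element $\left(\begin{smallmatrix}0&1\\0&0\end{smallmatrix}\right)\in LM(A)\setminus M(A)$ is exactly what the paper has in mind.

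Concerning the obstacle you flag: the paper itself is terse here (it asserts isometry from strict density plus part (i)), but the gap closes more easily than you fear and without needing full two-sided strict continuity of $g$. For any $m\in M(A)$ one has $\|m\|_{M(A)}=\sup_{a\in A,\,\|a\|\le 1}\|ma\|_A$; hence for $g\in M(X)'_{M(A)}$,
\[
\|g\|=\sup_{\|z\|\le 1}\|g(z)\|=\sup_{\|z\|\le 1}\sup_{\|a\|\le 1}\|g(z)a\|=\sup_{\|z\|\le 1,\,\|a\|\le 1}\|g(za)\|\le\sup_{x\in X,\,\|x\|\le 1}\|g(x)\|=\|\rho(g)\|,
\]
since $za\in X$ with $\|za\|\le 1$. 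The reverse inequality is trivial, so $\rho$ is isometric. This uses only right $M(A)$-linearity of $g$ and bypasses the adjointability issue you raised.
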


\begin{proof}
Suppose, there exists a bounded $M(A)$-linear functional $f_0 : M(X) \to M(A)$ such that $f_0=0$ on $X \subseteq M(X)$, but $f_0(m) \neq 0$ for some $m \in M(X)$. Let $\{ x_\alpha: \alpha \in I\}$ be a net of elements of $X$ converging strictly to $m$, i.e.~the nets $\{ x_\alpha a: \alpha \in I\}$ converge to $ma \in X$ in norm for any $a \in A$. Consider the set $\{ f_0(ma) : a \in A \}$. All these values are equal to zero by supposition. Since  $f_0(ma) = f_0(m) a = 0$ for any $a \in A$ and $A$ is an essential ideal of $M(A)$ we conclude $f_0(m) =0$, a contradiction to our assumption.

Restricting $f \in M(X)'$ to $X \subseteq M(X)$ we obtain a bounded $A$-linear functional of $X'$. The norm is preserved, since $X$ ist strictly dense in $M(X)$ and there does not exist any non-trivial bounded $M(A)$-linear functional on $M(X)$ vanishing on $X \subseteq M(X)$. The example in the beginning of the present section can be read as follows: Let $A$ be a C*-algebra such that $LMA) \supset M(A)$. Setting $A=X$ we get $X'=LM(A)$ and $M(X)'=M(A)'= M(A)$ since $M(X)=M(X)'$ is selfdual. So $X' \supset M(X)'$ and an $A$-valued bounded functional in $LM(A) \setminus M(A)$ cannot be continued from $X \subseteq M(X)$ to $M(X)$.
\end{proof}

Consequently, there does not exist any general Hahn-Banach type theorem for bounded C*-linear functionals for pairs $(X, M(X))$ of full Hilbert C*-modules $X$ and their multiplier modules $M(X)$, cf.~\cite{F_2002,F_2024}. However, if a bounded $A$-linear functional from a full Hilbert $A$-module to the C*-algebra $A$ admits a continuation to a bounded $M(A)$-linear functional from its multiplier module to $M(A)$ of the same norm value, then it is unique.

\begin{thm} 
Let $A$ be a non-unital C*-algebra with multiplier algebra $M(A)$. Let $X$ be a full Hilbert $A$-module and $M(X)$ be its full multiplier module.
\begin{enumerate}
  \item There does not exist any bounded $M(A)$-linear map $T_0: M(X) \to M(X)'$ such that $T_0 \neq 0$ on $M(X)$, but $T_0 =0$ on $X \subseteq M(X)$.
  \item The Banach space ${\rm End}_{M(A)}(M(X),M(X)')$ admits an isometric embedding into the Banach space ${\rm End}_A(X,X')$ by restricting an element on the domain from $M(X)$ to $X\subseteq M(X)$. There exist examples such that ${\rm End}_A(X,X')$  is strictly larger than the embedded copy of 
\linebreak[4]
${\rm End}_{M(A)}(M(X),M(X)')$.
\end{enumerate}
\end{thm}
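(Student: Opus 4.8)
The plan is to follow the template of the two preceding theorems, now viewing a bounded $M(A)$-linear map $T\colon M(X)\to M(X)'$ as an operator with values in the dual module and exploiting the same two mechanisms that drove those proofs: the factorisation $m\cdot a\in X$ for every $m\in M(X)$ and $a\in A$ (the identification $X=M(X)A$ underlying Definition~\ref{dfn_multmod}), and the fact that $A$ is an essential ideal of $M(A)$. Throughout I fix the right $M(A)$-action on the dual module for which $X\hookrightarrow X'$, $x\mapsto\langle x,\cdot\rangle$, and $M(X)\hookrightarrow M(X)'$ are modular embeddings; concretely $(g\cdot b)(n)=b^{*}g(n)$ for $g\in M(X)'$, $b\in M(A)$, $n\in M(X)$.

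For (i) I argue by contradiction, transcribing parts (i) of the two previous theorems. Assume $T_{0}\colon M(X)\to M(X)'$ is bounded $M(A)$-linear with $T_{0}=0$ on $X\subseteq M(X)$ but $T_{0}(m)\ne 0$ for some $m$, and pick $n\in M(X)$ with $T_{0}(m)(n)\ne 0$. For every $a\in A$ one has $m\cdot a\in X$, hence $T_{0}(m\cdot a)=0$ in $M(X)'$; by $M(A)$-linearity $T_{0}(m\cdot a)=T_{0}(m)\cdot a$, so $a^{*}\,T_{0}(m)(n)=(T_{0}(m)\cdot a)(n)=0$ for all $a\in A$. Since $\{a^{*}:a\in A\}=A$ annihilates $T_{0}(m)(n)\in M(A)$ and $A$ is essential in $M(A)$, we get $T_{0}(m)(n)=0$, a contradiction. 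In particular, if a bounded $A$-linear map $X\to X'$ admits a bounded $M(A)$-linear continuation $M(X)\to M(X)'$ of the same norm, that continuation is unique.

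For (ii) I send $S\in{\rm End}_{M(A)}(M(X),M(X)')$ to $\tilde S\in{\rm End}_{A}(X,X')$ defined by $\tilde S(x)=S(x)|_{X}$; that $\tilde S$ is bounded and $A$-linear and that $S\mapsto\tilde S$ is linear and norm-non-increasing are immediate. Injectivity reduces to (i): if $\tilde S=0$ then for $x\in X$, $m\in M(X)$, $a\in A$ one has $S(x)(m)\,a=S(x)(m\cdot a)=\tilde S(x)(m\cdot a)=0$, so $S(x)(m)=0$ by essentiality, whence $S$ vanishes on $X$ and $S=0$ by (i). For the isometry I combine the strict density of the unit ball of $X$ in that of $M(X)$ (\cite[Thm.~1.8, 1.9]{BG_2004}) with the identity $\|g\|_{M(X)'}=\sup\{\|g\cdot a\|:a\in A,\ \|a\|\le 1\}$ — another consequence of essentiality, via the isometric right action of $M(A)$ on $A$ — which makes $\|\cdot\|_{M(X)'}$ lower semicontinuous along bounded strictly convergent nets; together with the isometric restriction map $M(X)'\to X'$ of Theorem~\ref{funct} this yields $\|S\|=\sup_{x\in X,\,\|x\|\le 1}\|S(x)\|_{M(X)'}=\sup_{x}\|\tilde S(x)\|_{X'}=\|\tilde S\|$.

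For the strict-inclusion example I take $A$ to be the C*-algebra at the beginning of this section (so $LM(A)\supsetneq M(A)$, hence $QM(A)\supsetneq M(A)$ by \cite[Cor.~4.18]{Brown}) and $X=A$ with the standard inner product, so $M(X)=M(A)$. Running the factorisation argument for $A$-dual modules identifies ${\rm End}_{A}(A,A'_{A})$ isometrically with $QM(A)$ — every such operator has the form $a\mapsto\bigl(y\mapsto a^{*}qy\bigr)$ for a unique $q\in QM(A)$, and conversely — while $M(X)'_{M(A)}=M(A)$ and ${\rm End}_{M(A)}(M(A),M(A)'_{M(A)})\cong QM(M(A))=M(A)$ since $M(A)$ is unital; under the restriction embedding this copy sits in $QM(A)$ as $M(A)$, which is strictly smaller, so the operator attached to any $q\in QM(A)\setminus M(A)$ fails to extend to $M(X)$. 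The step I expect to be delicate is the isometry in (ii): keeping the dual-module conventions ($a$ versus $a^{*}$) straight and justifying that the norm of $M(X)'$ is genuinely recovered as $\sup_{\|a\|\le1}\|g\cdot a\|$, which is what upgrades the restriction from contractive to isometric; part (i) and the $QM(A)$ identification are routine variations on material already in the paper.
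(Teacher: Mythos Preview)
Your proof is correct and follows essentially the same template as the paper's. The one structural difference is in part~(i): the paper reduces to Theorem~\ref{funct}(i) by asserting that the functional $T_0(m)\in M(X)'$ vanishes on $X=M(X)A$ and then invoking the functional case, whereas you argue directly from $m\cdot a\in X$ and essentiality of $A$ in $M(A)$; your direct computation is in fact what is needed to justify the paper's reduction step, so the two arguments coincide once unpacked. For part~(ii) and the example, your treatment matches the paper's (with more explicit detail on the isometry, which the paper covers by the terse remark that $X$ is strictly dense in $M(X)$ and $M(X)'A=X'$), and the $X=A$, $QM(A)$ versus $M(A)$ example is identical.
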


\begin{proof}
Assume, there exists a bounded $M(A)$-linear map $T_0: M(X) \to M(X)'$ such that  $T_0 \neq 0$ on $M(X)$, but $T_0 =0$ on $X \subseteq M(X)$. Then there exists a non-zero element $m \in M(X)$ such that $T_0(m) \in M(X)'$,  $T_0(m) \neq 0$ on $M(X)$, but $T_0(m) = 0$ on $M(X)A = X$. This was excluded by Theorem \ref{funct}, (i), a contradiction. 

If we restrict an element $T \in {\rm End}_{M(A)}(M(X),M(X)')$ to $M(X)A = X$ we obtain an element $T \in {\rm End}_A(X,X')$ of the same operator norm, since $X$ is strictly dense in $M(X)$ and $M(X)'A = X'$. The algebraic operations are preserved. 

The example in the beginning of the present section shows: Let $A$ be a C*-algebra such that $LMA) \supset M(A)$. Setting $A=X$ we get ${\rm End}_A(X,X') = QM({\rm K}_A(X))$ and ${\rm End}_{M(A)}(M(X),M(X)') = QM({\rm K}_{M(A)}(M(X))$, cf.~\cite[Thm.~1.6]{Lin_1991}.  Since $QM({\rm K}_A(X)) \not= LM({\rm K}_A(X))$, but $QM({\rm K}_{M(A)}(M(X))) = M({\rm K}_{M(A)}(M(X))) = M(M(A)) = M(A)$ by \cite[Cor.~4.18]{Brown}, the assertion is demonstrated.
\end{proof}

Continuing Remark \ref{non-extend} the quasi-multiplier $T^*T$ of $A$ of the example induces a bounded modular map from $X=A$ to $X'=LM(A)$ that cannot be extended to a bounded modular map of the same norm from $M(X)=M(A)$ to $M(X)'=M(A)$.

\begin{rmk}
One natural question is whether multiplier modules might be C*-reflexive with respect to certain C*-algebras $B$ with $A \subseteq B \subseteq M(A)$, or not. The background are Definition \ref{dfn_extensions}, and a result by W.~L.~Paschke that the C*-valued inner product on a full Hilbert C*-module can be always continued to its C*-reflexive C*-bidual Banach C*-module preserving the isometric embedding of the former into the latter, cf.~\cite{Paschke_1974}. One could hope to find intrinsic topological or order-algebraical characterizations of some C*-reflexive Hilbert C*-modules, cf.~the $l_2(K(l_2))$-example above in the context of \cite{Paschke}. In general, C*-reflexivity of Hilbert C*-modules highly depends on both the inner structures of the C*-algebra of coefficients and of the Hilbert C*-module under consideration.

To test this point of view select a non-unital C*-algebra $A$ with $M(A)=LM(A)$. First, if one considers $A$-reflexivity of $X=A$ with the standard $A$-valued inner product then $M_A(X) = M(A)$, $X_A'=M(A)$ and $X_A'' = A$, so $M_A(X)$ is not $A$-reflexive. Furthermore, considering $X=A$ as a Hilbert $M(A)$-module, $M_{M(A)}(X)= A$, $X_{M(A)}'=M(A)$ and $X_{M(A)}'' = M(A)$, so it is not $M(A)$-reflexive, either. The deeper reason could be that condition (iv) of Definition \ref{dfn_extensions} might be violated for the C*-reflexive Hilbert C*-module extension of $X$.
\end{rmk}

\medskip 
Furthermore, instead of two-sided norm-closed ideals $A$ in their multiplier algebras $M(A)$ (and hence, multiplier modules) one-sided norm-closed ideals in C*-algebras and their biorthogonal complements in the host C*-algebra can be considered. Here several notions of density of the respective ideal in the arising C*-subalgebra of the host C*-algebra have to be compared, and it is far from obvious that they coincide like in the two-sided norm-closed ideal case, cf.~\cite{Manuilov_2025}. These considerations provide a wider variety of examples of Hilbert C*-modules and their multiplier modules.


\section*{Acknowledgement}
I am grateful to David R.~Larson for the years of fruitful collaboration during 1998-2002.
David R.~Larson from Texas A{\&}M University made outstanding contributions to various mathematics subjects including operator theory, operator algebras, applied harmonic analysis, in particular wavelet and frame theory. As well he was engaged in organizing the highly impacted annual Great Plains Operator Theory Symposium (GPOTS) during many years.

I thank for a notice by Bartosz Kwa\'sniewski who attracted my attention to the double centralizer type approach to multiplier modules using only Banach C*-module constructions, and to problems about the equivalence of both these approaches in the case of Hilbert C*-modules, cf.~\cite{Daws_2010,Daws_2011,BKMS_2024,F_1997,Solel_2001}. 


\end{document}